\documentclass[10pt,onecolumn]{IEEEtran}

\usepackage{amssymb,amsmath, amsfonts, theorem}
\usepackage{graphicx}
\usepackage{epsfig}
\usepackage{algorithm,algorithmic,xspace}
\usepackage{times}
\usepackage{color}
\usepackage{flafter}
\usepackage{psfrag}
\usepackage{pstricks}
\usepackage[tight,footnotesize]{subfigure}
\usepackage{bbm}
\usepackage{float}
\usepackage{pgfplots}
\pgfplotsset{compat=1.3}
\usepackage[Symbol]{upgreek}
\usepackage{tikz}
\usetikzlibrary{matrix}
\usetikzlibrary{positioning}
\usetikzlibrary{arrows,shapes}
\usepackage{pst-pdf}
\usepackage{subfigure}
\usepackage{cases}
\usepackage{enumerate}
\usepackage{authblk}
\usepackage{fullpage}
\usepackage{multicol}
\usepackage{cite}

 % Indicator Function
 
 \newcommand{\mc}{\mathcal}

\newcommand{\reals}{{\mathbb R}}

%%%%%
\newcommand{\ones}{\mathbbm{1}}
\newcommand{\ms}{\scriptscriptstyle}
\newcommand{\Etree}{E_{{\ms \mc{T}}}}
\newcommand{\Eforrest}{E_{{\ms \mc{F}}}}
\newcommand{\Ecycle}{E_{{\ms \mc{C}}}}
\newcommand{\Etreep}{E_{{\ms \mc{T}_+}}}

\newcommand{\Tcycle}{T_{{\ms (\mc{T},\mc{C})}}}
\newcommand{\Tcyclef}{T_{{\ms (\mc{F},\mc{C})}}}

\newcommand{\R}{R_{\ms (\mc{T},\mc{C})}}
\newcommand{\Rf}{R_{\ms (\mc{F},\mc{C})}}

\newcommand{\Rtplus}{R_{\ms (\mc{T}_{+},\mc{C}_{+})}}

\newcommand{\Edgelapf}{L_e(\mc{F})R_{\ms (\mc{F},\mc{C})}WR_{\ms (\mc{F},\mc{C})}^T}

%% theorems (always good to have some)
 \newtheorem{theorem}{Theorem}[section]
 \newtheorem{proposition}[theorem]{Proposition}
 \newtheorem{corollary}[theorem]{Corollary}
 \newtheorem{definition}[theorem]{Definition}

 \def\QEDclosed{\mbox{\rule[0pt]{1.3ex}{1.3ex}}} % for a filled box

%\def\endproof{\hspace*{\fill}~\QED\par\endtrivlist\unskip}

% DZ
\newcommand{\bea}{\begin{eqnarray}}
\newcommand{\eea}{\end{eqnarray}}
\newcommand{\beas}{\begin{eqnarray*}}
\newcommand{\eeas}{\end{eqnarray*}}
\newcommand{\leftm}{\left[\begin{array}}
\newcommand{\rightm}{\end{array}\right]}

 % "box" symbols at end of proofs
 \def\QEDclosed{\mbox{\rule[0pt]{1.3ex}{1.3ex}}} % for a filled box
 % V1.6 some journals use an open box instead that will just fit around a closed one
 
  % default to closed
%
\newcommand{\margin}[1]{\marginpar{\tiny\color{red} #1}}

\newcommand{\MBremove}[1]{\margin{removed by MB}}
% Procend
\newcommand\oprocendsymbol{\hbox{$\square$}}
\newcommand\oprocend{\relax\ifmmode\else\unskip\hfill\fi\oprocendsymbol}

\title{\LARGE \bf On the Definiteness of the Weighted Laplacian\\ and its Connection to Effective Resistance}

\author{Daniel Zelazo$^1$ and Mathias B\"urger$^2$
\thanks{$^1$Daniel Zelazo is with the Faculty of Aerospace Engineering, Technion, Israel.  
        {\tt\small dzelazo@technion.ac.il}.}
\thanks{$^2$Mathias B\"urger is with the Cognitive Systems Group at Robert Bosch GmbH
        {\tt\small mathias.buerger@de.bosch.com}.}%
}

\begin{document}
\maketitle
\thispagestyle{empty}
\pagestyle{empty}

\begin{abstract}
This work explores the definiteness of the weighted graph Laplacian matrix with negative edge weights.  The definiteness of the weighted Laplacian is studied in terms of certain matrices that are related via congruent and similarity transformations.  For a graph with a single negative weight edge, we show that the weighted Laplacian becomes indefinite if the magnitude of the negative weight is less than the inverse of the effective resistance between the two incident nodes.  This result is extended to multiple negative weight edges.  The utility of these results are demonstrated in a weighted consensus network where appropriately placed negative weight edges can induce a clustering behavior for the protocol. 
\end{abstract}

%%%%%%%%%%%%%%%%%%%%%%%
\vspace{-10pt}
\section{Introduction}

The combinatorial graph Laplacian matrix is one of the most important and useful matrix representations of a graph.  The spectral properties of the graph Laplacian matrix can be used to study many combinatorial properties of a graph.  Well-known results in this venue include the \emph{Matrix-Tree Theorem} which states that the number of spanning trees in a graph is equal to any cofactor of the Laplacian matrix, or the \emph{algebraic connectivity} that relates the connectedness of a graph to the smallest non-zero eigenvalue of the Laplacian \cite{Fiedler1973, Godsil2001, Mohar1997}.  The graph Laplacian has also proved useful in the study of random walks and Markov chains, graph partitioning, spectral clustering, and more \cite{Gutman1994, Spielman2010,Luxburg2007}.  Within the controls community, the Laplacian matrix has taken a central role in the control and coordination of multi-agent systems due to its distributed structure and utility for problems related to formation control and synchronization \cite{Mesbahi2010}.

The notion of edge weights in a graph is a natural mathematical extension to the combinatorial theory of graphs.  Edge weights are also motivated by the modeling of physical processes \cite{Newman2003}, or as a design parameter in engineered systems used to improve certain performance metrics \cite{Xiao2004}.  For many reasons, edge weights are often taken to be non-negative numbers.  Indeed, in this case the weighted graph Laplacian matrix admits many favorable properties.  For example, the weighted Laplacian with positive weights belongs to the class of $Z-$matrices for which many results are known \cite{Horn1991}.  Another important property is that for undirected graphs with non-negative edge weights, the weighted Laplacian matrix is always positive semi-definite.

Recently there has been a growing interest in graphs containing negative edge weights.  In \cite{Xiao2004} it was shown that negative edge weights appear as an optimal solution for finding the fastest converging linear iteration used in distributed averaging.   The introduction of negative edge weights in problems related to the control of multi-agent systems can lead to steady-state configurations that are \emph{clustering} \cite{Qin2013, Xia2011a}.  In \cite{Altafini2013b}, negative weights are used to model antagonistic interactions in a social network and conditions are provided for when such weights lead to \emph{bipartite consensus}.  Finally, bounds on the number of positive, negative, and zero eigenvalues of the weighted Laplacian with negative weights are provided in \cite{Bronski2014}.

The study of the weighted Laplacian with negative edge weights is therefore of interest to a broad range of communities, and motivates the contributions of this work.  In particular, we examine conditions on how both the magnitude and location of negative weight edges in a weighted graph impact the definiteness of the weighted Laplacian.   This is achieved by first providing general results on how the signature of the weighted Laplacian is related to certain associated matrices, including the weighted edge Laplacian matrix \cite{Zelazo2009b} and another matrix related to the cut space of a graph \cite{Godsil2001}.  These results are then used to make conclusions on the definiteness of the weighted Laplacian.  %In particular, we show that whenever negative edge weights form a cut in the graph, the associated Laplacian will be indefinite for any magnitude of those weights.  
For the case of a graph with a single negative edge weight, we demonstrate that the definiteness of the Laplacian depends on the magnitude of that weight and is intimately related to the effective resistance between the incident nodes.  This result is also extended to graphs with multiple negative edge weights.  We demonstrate the utility of these results in the context of a linear weighted consensus protocol showing how careful selection of negative weight values can lead to a clustering steady-state configuration.

The organization of the paper is as follows.  Some basic mathematical preliminaries related to graph theory are given in the next sub-section.  In Section \ref{sec:signature}, results on the signature of the weighted Laplacian are provided.  The main results on the definiteness of the weighted Laplacian and the connection to effective resistance is presented in Section \ref{sec:definiteness}.  Section \ref{sec:example} shows how these results can be used in a weighted linear consensus protocol.  Finally, some concluding remarks are offered in Section \ref{sec:conclusion}.
%
%\vspace{-20pt}
\paragraph*{Preliminaries}
This work makes use of basic notions from algebraic graph theory \cite{Godsil2001}.  An undirected \emph{weighted graph} $\mc{G} = (\mc{V},\mc{E},\mc{W})$ is a triple consisting of the node set $\mc{V}$, edge set $\mc{E} \subseteq \mc{V} \times \mc{V}$, and weight function that maps each edge to a scalar value, $\mc{W}: \mc{E} \rightarrow \reals \setminus \{0\}$.%\footnote{While 0-valued weights may be permitted in general, we do not consider this here since it is equivalent to removing that edge from the edge set.}  
Note that we do not require the weights to be positive.  We often collect the weights of all the edges in a diagonal matrix $W \in \reals^{|\mc{E} | \times |\mc{E} |}$ such that $[W]_{kk} = \mc{W}(k)=w_k$ with $k = (i,j) \in \mc{E}$.  %We only consider undirected graphs.

A \emph{spanning tree} subgraph of $\mc{G}$ is a connected graph $\mc{T} = (\mc{V},\mc{E}_{\ms \mc{T}}) \subseteq \mc{G}$ that contains no cycles.  Similarly, a \emph{spanning forrest} subgraph of $\mc{G}$ is the graph $\mc{F} = (\mc{V},\mc{E}_{\ms \mc{F}}) \subseteq \mc{G}$ that contains no cycles (note that $\mc{F}$ can be a disconnected graph).
Every graph $\mc{G}$ can always be expressed as the union of a spanning tree (or forrest if the graph is not connected) and another subgraph containing the remaining edges, i.e., $\mc{G} = \mc{T} \cup \mc{C}$ ($\mc{G} = \mc{F} \cup \mc{C}$).  The subgraph $\mc{C}$ necessarily ``completes cycles" in $\mc{G}$, and is defined as $\mc{C} = (\mc{V},\mc{E}_{\ms \mc{C}}) \subset \mc{G}$ with $\mc{E}_{\ms \mc{C}} = \mc{E} \setminus \mc{E}_{\ms \mc{T}}$ (similarly defined with a forrest instead of tree); we refer to this as the \emph{cycle subgraph}.  %In this work, we reserve the notations $\mc{T}, \mc{F},$ and $\mc{C}$ for the spanning tree subgraphs, spanning forrest subgraphs, and cycle subgraphs.

The \emph{incidence matrix} of a graph, $E(\mc{G}) \in \reals^{|\mc{V}| \times |\mc{E}|}$ is defined in the normal way.%, is such that for edge $k = (i,j) \in \mc{E}$, $[E(\mc{G})]_{i,k} = 1$, $[E(\mc{G})]_{j,k} = -1$, and $[E(\mc{G})]_{\ell,k} = 0$ for $\ell \neq i,j$.  
With an appropriate labeling of the edges, we can always express the incidence matrix as {\small $E(\mc{G}) = \leftm{cc} E(\mc{T}) & E(\mc{C}) \rightm$} ({\small $E(\mc{G}) = \leftm{cc} E(\mc{F}) & E(\mc{C}) \rightm$}).  An important property of the incidence matrix is that $E(\mc{G})^T\ones = 0$ for any graph $\mc{G}$, where $\ones$ is the vector of all ones.  For a more compact notation, we will write $E := E(\mc{G}), \Etree = E(\mc{T}), \Eforrest = E(\mc{F})$, and $\Ecycle = E(\mc{C})$.  

%%%%%%%%%%%%%%%%%%%%%%%
\section{The Signature of the Weighted Laplacian}\label{sec:signature}

In this section we explore properties related to the \emph{signature} of the weighted Laplacian.\footnote{The {signature} of a real symmetric matrix $A$, denoted by the triple $\sigma(A)=(n_+,n_-,n_0)$, is the number of positive, negative, and zero eigenvalues of the matrix.}  Knowledge of the signature can be used, for example, to draw conclusions about the definiteness of that matrix.
%\footnote{For example, the signature of a positive semi-definite matrix $A$ is $\sigma(A)=(n_+,0,n_0)$.}  
%
An important result on the the signature of a matrix is \emph{Sylvester's Law of Inertia}, which states that all congruent symmetric matrices have the same signature \cite{Horn1985}.\footnote{A square matrix $A$ is \emph{congruent} to a square matrix $B$ of the same dimension if there exists an invertible matrix $S$ such that $B=S^TAS$.}  

Recall that for a weighted graph with only positive edge weights, one has $\sigma(L(\mc{G})) = (|\mc{V}|-c,0,c)$, where $c$ is the number of connected components of $\mc{G}$~\cite{Godsil2001}.  For a graph with negative weights, however, this is not true in general.  
%Indeed, a graph with negative weights can have both positive and negative eigenvalues.  
%
Furthermore, the number of eigenvalues at the origin will no longer be a function of only the number of connected components in the graph.  

%In this section, we explore various congruent transformations of the weighted graph Laplacian in order to make conclusions about its signature.  

To understand how the presence of negative edge weights influences the definiteness of the weighted Laplacian, we consider the definiteness of certain associated matrices that are related via congruent transformations. In this direction, we first review the notion of the \emph{edge Laplacian} \cite{Zelazo2009b}, and provide here an extension for weighted graphs.
%
%\begin{definition}\label{def:weighted_edgelap}
For a weighted graph $\mc{G}=(\mc{V},\mc{E},\mc{W})$, the \emph{weighted edge Laplacian} matrix is defined as
\bea\label{weighted_edge_lap}
L_e(\mc{G}) = W^{\frac{1}{2}}E^TEW^{\frac{1}{2}} \in \reals^{|\mc{E}| \times |\mc{E}|}.
\eea
%\end{definition}

We now present some basic results relating the weighted edge Laplacian matrix to the graph Laplacian.
\begin{proposition}\label{prop:edgelap_sim2}
The weighted Laplacian matrix $L(\mc{G})=EWE^T$ is similar to the matrix
\bea\label{essential_weighted_edgelap}
\leftm{cc} L_e(\mc{F}) \Rf W\Rf^T & {\bf 0} \\ {\bf 0} & {\bf 0}_c \rightm,
\eea
where $\mc{G}$ has $c$ connected components, $\mc{F} \subseteq \mc{G}$ is a spanning forrest of $\mc{G}$, and 
$$\Rf = \leftm{cc} I & L_e(\mc{F})^{-1}\Eforrest^T\Ecycle \rightm=  \leftm{cc} I & \Tcyclef \rightm.$$
\end{proposition}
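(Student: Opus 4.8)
The plan is to produce an explicit invertible $S$ for which $S^{-1}L(\mc{G})S$ equals the block matrix \eqref{essential_weighted_edgelap}; the starting point is the factorization of $L(\mc{G})$ through the spanning forrest. Using the edge ordering from the preliminaries, write $E=\leftm{cc}\Eforrest & \Ecycle\rightm$. Since $\mc{F}$ is a spanning forrest of a graph with $c$ components, $\Eforrest$ has full column rank $|\mc{V}|-c$, so $L_e(\mc{F})=\Eforrest^T\Eforrest$ is invertible. Every non-forrest edge closes a unique cycle through forrest edges, so each column of $\Ecycle$ lies in $\mathrm{col}(\Eforrest)$; hence $\Ecycle=\Eforrest\Tcyclef$ with $\Tcyclef=L_e(\mc{F})^{-1}\Eforrest^T\Ecycle$ the unique (normal-equations) solution, i.e.\ $E=\Eforrest\Rf$. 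Substituting into $L(\mc{G})=EWE^T$ gives $L(\mc{G})=\Eforrest M\Eforrest^T$ where $M:=\Rf W\Rf^T$. This part is routine given the preliminaries.

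Next I identify the subspace the zero block comes from: $\Eforrest^T\xb=0$ exactly when $\xb$ is constant on each connected component, so $\ker(\Eforrest^T)=\mathrm{col}(\Eforrest)^{\perp}$ has dimension $c$ and is spanned by the $c$ component-indicator vectors; collect these in $P\in\reals^{|\mc{V}|\times c}$, so that $\Eforrest^TP=0$ and $P$ has full column rank. Then $\mathrm{col}\!\left(\Eforrest L_e(\mc{F})^{-1}\right)=\mathrm{col}(\Eforrest)$ and $\mathrm{col}(P)$ are complementary in $\reals^{|\mc{V}|}$, so
\[
S=\leftm{cc}\Eforrest L_e(\mc{F})^{-1} & P\rightm,\qquad
S^{-1}=\leftm{c}\Eforrest^T\\ (P^TP)^{-1}P^T\rightm
\]
is invertible (that $S^{-1}S=I$ is immediate from $\Eforrest^T\Eforrest=L_e(\mc{F})$ and $\Eforrest^TP=0$). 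Computing $S^{-1}L(\mc{G})S=S^{-1}\Eforrest M\Eforrest^TS$ block by block: the $(1,1)$ block is $(\Eforrest^T\Eforrest)\,M\,(\Eforrest^T\Eforrest)\,L_e(\mc{F})^{-1}=L_e(\mc{F})M=L_e(\mc{F})\Rf W\Rf^T$; both off-diagonal blocks carry a factor $\Eforrest^TP$ (resp.\ $P^T\Eforrest$) and vanish; and the $(2,2)$ block vanishes for the same reason. This is precisely \eqref{essential_weighted_edgelap}.

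There is no deep obstacle; the one subtlety is the placement of $L_e(\mc{F})$. The obvious conjugator $\leftm{cc}\Eforrest & P\rightm$ would leave $M L_e(\mc{F})$ in the top-left block, which is similar to — but not equal to — the asserted $L_e(\mc{F})M$. Absorbing the factor $L_e(\mc{F})^{-1}$ into the first block of $S$ (equivalently, post-composing the obvious transformation with a conjugation by $\mathrm{diag}(L_e(\mc{F}),I_c)$) corrects the ordering so that the result is exactly \eqref{essential_weighted_edgelap}.
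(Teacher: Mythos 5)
Your proof is correct and takes essentially the same route as the paper: an explicit similarity transformation built from $\Eforrest$ together with a basis for $\mc{N}(\Eforrest^T)$, applied to the factorization $L(\mc{G})=\Eforrest\,\Rf W\Rf^T\,\Eforrest^T$. Your placement of $L_e(\mc{F})^{-1}$ inside $S$ is in fact slightly more careful than the paper's choice $S=\left[\,\Eforrest \;\; N_{\ms\mc{F}}\,\right]$, whose top-left block comes out as the reversed (though similar) product $\Rf W\Rf^T L_e(\mc{F})$ rather than the stated $L_e(\mc{F})\Rf W\Rf^T$.
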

%\proofname{
\begin{proof}
Define the transformation matrices 
\beas
S = \leftm{cc} \Eforrest& N_{\ms {\mc{F}}} \rightm &,&
S^{-1} = \leftm{c} L_e(\mc{F})^{-1}\Eforrest^T \\ N_{\ms {\mc{F}}}^T \rightm,
\eeas
where $\mbox{\bf IM} [N_{\ms \mc{F}} ]= \mbox{\bf span}[\mc{N}(\Eforrest^T)]$.
It is straightforward to verify that the matrix in (\ref{essential_weighted_edgelap}) equals $S^{-1}L(\mc{G})S$.
\QEDclosed
\end{proof}
%}

The matrix $L_e(\mc{F}) \Rf W\Rf^T:=L_{\ms ess}(\mc{F})$ is referred to as the \emph{essential edge Laplacian} \cite{Zelazo2011} (for a connected graph with spanning tree $\mc{T}$, we write $L_{\ms ess}(\mc{T})$).  Indeed, if $\mc{G}$ is connected, then $L_{\ms ess}(\mc{F})$ has the same non-zero eigenvalues as the weighted Laplacian.  For a more in depth discussion on the matrices $\Rf$ and $\Tcyclef$, please see \cite{Zelazo2009b, Zelazo2011}.  Note also that the matrix $L_e(\mc{F})^{-1}\Eforrest^T$ is the \emph{left-inverse} of $\Eforrest$; we denote this matrix as $\Eforrest^L$.
Proposition \ref{prop:edgelap_sim2} immediately leads to the first result on the signature of the weighted Laplacian and its relationship to the essential edge Laplacian.\footnote{We use a slight abuse of terminology by referring to the signature of $L_{\ms ess}(\mc{F})$ as it is not a symmetric matrix in general.  However, it is straight forward to show $L_{\ms ess}(\mc{F})$ is similar to a symmetric matrix, and thus the meaning of $\sigma(L_{\ms ess}(\mc{F}))$ is clear.}

\begin{theorem}\label{thm:signature1}
Assume $\mc{G}$ has $c$ connected components and $\sigma(L(\mc{G}))=(n_+,n_-,n_0)$.  Then $\sigma(L_{\ms ess}(\mc{F})) = (n_+,n_-,n_0-c)$.
\end{theorem}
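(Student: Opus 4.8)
The plan is to invoke Sylvester's Law of Inertia together with the similarity established in Proposition~\ref{prop:edgelap_sim2}. First I would recall that similar matrices share the same spectrum, hence the same signature. Proposition~\ref{prop:edgelap_sim2} states that $L(\mc{G})$ is similar to the block-diagonal matrix $\mathrm{diag}(L_{\ms ess}(\mc{F}), {\bf 0}_c)$, where the trailing block is a $c \times c$ zero matrix. Therefore $\sigma(L(\mc{G})) = \sigma(\mathrm{diag}(L_{\ms ess}(\mc{F}), {\bf 0}_c))$.

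Next I would observe that the signature of a block-diagonal matrix is the coordinate-wise sum of the signatures of its blocks: the positive (resp.\ negative, zero) eigenvalues of $\mathrm{diag}(A, B)$ are precisely those of $A$ together with those of $B$. The block ${\bf 0}_c$ contributes $(0, 0, c)$ to the signature. Consequently, writing $\sigma(L_{\ms ess}(\mc{F})) = (m_+, m_-, m_0)$, we get $\sigma(L(\mc{G})) = (m_+, m_-, m_0 + c)$. Matching this against the hypothesis $\sigma(L(\mc{G})) = (n_+, n_-, n_0)$ yields $m_+ = n_+$, $m_- = n_-$, and $m_0 = n_0 - c$, which is exactly the claimed identity $\sigma(L_{\ms ess}(\mc{F})) = (n_+, n_-, n_0 - c)$.

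One point that deserves a sentence of care, and which I expect to be the only real subtlety, is that $L_{\ms ess}(\mc{F}) = L_e(\mc{F})\Rf W \Rf^T$ is not symmetric in general, so a~priori the notion of its ``signature'' requires justification; this is the abuse of terminology flagged in the footnote preceding the theorem. The resolution is that $L_{\ms ess}(\mc{F})$ is similar to a symmetric matrix: since $L_e(\mc{F}) = W^{1/2}\Eforrest^T\Eforrest W^{1/2}$ is symmetric positive definite (the columns of $\Eforrest$ are linearly independent because $\mc{F}$ is a forest), we may write $L_{\ms ess}(\mc{F}) = L_e(\mc{F})^{1/2}\bigl(L_e(\mc{F})^{1/2}\Rf W \Rf^T L_e(\mc{F})^{1/2}\bigr)L_e(\mc{F})^{-1/2}$, exhibiting it as similar to the symmetric matrix $L_e(\mc{F})^{1/2}\Rf W \Rf^T L_e(\mc{F})^{1/2}$; its signature is well defined and, being a similarity, agrees with what the argument above uses. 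With this established the proof is immediate.
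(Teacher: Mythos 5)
Your proof is correct and follows exactly the route the paper intends: the paper states Theorem~\ref{thm:signature1} without proof, as an immediate consequence of Proposition~\ref{prop:edgelap_sim2}, and your argument simply fills in the routine details — similarity preserves the spectrum, the signature of a block-diagonal matrix is the coordinate-wise sum of the blocks' signatures, and the zero block contributes $(0,0,c)$. Your closing remark on the well-definedness of $\sigma(L_{\ms ess}(\mc{F}))$ is precisely the content of the paper's footnote; the only caveat is that your claim that $L_e(\mc{F})$ is symmetric positive definite tacitly requires the forest-edge weights to be positive (or $L_e(\mc{F})$ to be read as the unweighted edge Laplacian $\Eforrest^T\Eforrest$, which is what the inverse formula in Proposition~\ref{prop:edgelap_sim2} actually needs), an ambiguity the paper itself leaves unresolved.
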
 
The result of Proposition \ref{prop:edgelap_sim2} and Theorem \ref{thm:signature1} shows that the presence of negative edge weights can introduce both negative and zero eigenvalues.  The next result relates the signature of the essential edge Laplacian matrix to the matrix $\Rf W\Rf^T$.

\begin{corollary}\label{cor:inertia}
$$ \sigma(L_{\ms ess}(\mc{F}))=\sigma(\Rf W \Rf^T).$$
 \end{corollary}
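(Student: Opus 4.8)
The plan is to read off from the definition $L_{\ms ess}(\mc{F}) = L_e(\mc{F})\,\Rf W \Rf^T$ that $L_{\ms ess}(\mc{F})$ is the product of a symmetric \emph{positive definite} matrix with a symmetric matrix, and to invoke the standard fact that such a product has the same signature as its symmetric (right) factor. First I would verify that $L_e(\mc{F})$ is symmetric positive definite: writing $L_e(\mc{F}) = W^{\frac{1}{2}}\Eforrest^T\Eforrest W^{\frac{1}{2}}$ (with $W$ restricted to the forest edges), since $\mc{F}$ is acyclic its incidence matrix $\Eforrest$ has full column rank, so $x^T L_e(\mc{F})\,x = \|\Eforrest W^{\frac{1}{2}}x\|^2 > 0$ for every nonzero $x$ — here using that the spanning forest can be, and is, taken within the positively-weighted part of $\mc{G}$, which is precisely what makes $L_e(\mc{F})^{-1}$ meaningful in Proposition \ref{prop:edgelap_sim2}. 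Consequently the symmetric square root $P := L_e(\mc{F})^{\frac{1}{2}}$ exists and is invertible.

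Then I would write
\[
L_{\ms ess}(\mc{F}) \;=\; L_e(\mc{F})\,\Rf W \Rf^T \;=\; P^{2}\,\Rf W \Rf^T \;=\; P\,\bigl(P\,\Rf W \Rf^T\,P\bigr)\,P^{-1}.
\]
This exhibits $L_{\ms ess}(\mc{F})$ as \emph{similar} to $M := P\,\Rf W \Rf^T\,P$, so the two matrices have identical spectra and hence $\sigma(L_{\ms ess}(\mc{F})) = \sigma(M)$. On the other hand $M$ is, by construction, \emph{congruent} to $\Rf W \Rf^T$ through the invertible symmetric matrix $P$, so Sylvester's Law of Inertia gives $\sigma(M) = \sigma(\Rf W \Rf^T)$. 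Combining the two equalities yields $\sigma(L_{\ms ess}(\mc{F})) = \sigma(\Rf W \Rf^T)$, which is the assertion of the corollary; together with Theorem \ref{thm:signature1} this also identifies the common value as $(n_+,n_-,n_0-c)$.

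The one real subtlety is the insistence on positive \emph{definiteness} of $L_e(\mc{F})$, not merely invertibility: left-multiplication by a general symmetric nonsingular matrix can change the signature (for instance $\mathrm{diag}(1,-1)\cdot I$), so the square-root/similarity trick genuinely requires $L_e(\mc{F})\succ 0$, and the argument hinges on the spanning forest being drawn from the positively-weighted subgraph. Everything else is routine bookkeeping with similarity invariance of eigenvalues and Sylvester's Law of Inertia, so I would not expect any further obstacle.
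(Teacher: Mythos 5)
Your proof is correct and follows essentially the same route as the paper: the paper likewise conjugates $L_{\ms ess}(\mc{F})$ by $L_e(\mc{F})^{\frac{1}{2}}$ to obtain the symmetric matrix $L_e(\mc{F})^{\frac{1}{2}}\Rf W \Rf^T L_e(\mc{F})^{\frac{1}{2}}$, which is similar to $L_{\ms ess}(\mc{F})$ and congruent to $\Rf W \Rf^T$, and then invokes Sylvester's Law of Inertia. Your additional verification that $L_e(\mc{F})$ is positive definite (so that the invertible symmetric square root exists) is a point the paper leaves implicit, but it does not change the argument.
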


\begin{proof}
Using the similarity transformation matrix $L_e(\mc{F})^{\frac{1}{2}}$ we have that $\Edgelapf$ is similar to $L_e(\mc{F})^{\frac{1}{2}}\Rf W \Rf^TL_e(\mc{F})^{\frac{1}{2}}$.  This matrix is congruent to $\Rf W \Rf^T$ and thus has the same signature as $\Edgelapf$.   \QEDclosed
\end{proof}

\begin{corollary}\label{cor:inertia}
Assume $\mc{G}$ has $c$ connected components and $\sigma(L(\mc{G}))=(n_+,n_-,n_0)$.  Then $\sigma(\Rf W \Rf^T) = (n_+,n_-,n_0-c)$.
\end{corollary}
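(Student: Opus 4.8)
The plan is to obtain the statement immediately by composing the two results that precede it. Theorem~\ref{thm:signature1} gives $\sigma(L_{\ms ess}(\mc{F})) = (n_+, n_-, n_0 - c)$ under exactly the hypotheses assumed here, and the preceding corollary gives $\sigma(L_{\ms ess}(\mc{F})) = \sigma(\Rf W \Rf^T)$. Chaining these two equalities yields $\sigma(\Rf W \Rf^T) = (n_+, n_-, n_0 - c)$, which is the claim.

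For completeness I would also indicate where the content actually sits, so that the corollary is not merely a formal re-packaging. The count $(n_+, n_-, n_0 - c)$ traces back to Proposition~\ref{prop:edgelap_sim2}: the weighted Laplacian is similar to $\mathrm{diag}(L_{\ms ess}(\mc{F}), {\bf 0}_c)$, so its spectrum is that of $L_{\ms ess}(\mc{F})$ together with exactly $c$ additional zeros; equivalently, $L_{\ms ess}(\mc{F})$ inherits the signature of $L(\mc{G})$ with $c$ zeros removed. Then, as in the proof of the preceding corollary, conjugating $L_{\ms ess}(\mc{F}) = L_e(\mc{F})\Rf W\Rf^T$ by $L_e(\mc{F})^{1/2}$ --- a genuine similarity since $L_e(\mc{F})$ is invertible --- produces $L_e(\mc{F})^{1/2}\Rf W\Rf^T L_e(\mc{F})^{1/2}$, which is congruent to $\Rf W\Rf^T$. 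Sylvester's Law of Inertia carries the signature through the congruence, while similarity preserves it through the rest.

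There is essentially no obstacle: the whole argument is a similarity transformation (which preserves the entire spectrum, hence both the signature and the zero-eigenvalue count) followed by a congruence (which, by Sylvester's Law, preserves the signature). The only point requiring the usual mild care is the meaning of $L_e(\mc{F})^{1/2}$ and of $\sigma(L_{\ms ess}(\mc{F}))$ for the non-symmetric matrix $L_{\ms ess}(\mc{F})$; this is dispatched exactly as in the footnote to Theorem~\ref{thm:signature1}, namely that $L_{\ms ess}(\mc{F})$ is similar to a symmetric matrix, so every signature statement above is unambiguous.
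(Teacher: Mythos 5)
Your proof is correct and follows exactly the route the paper intends: the corollary is stated without proof precisely because it is the immediate composition of Theorem~\ref{thm:signature1} with the preceding corollary, and your additional remarks on the similarity/congruence chain simply unpack the same argument already given there.
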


The matrix $\Rf W\Rf^T$ turns out to be closely related to many combinatorial properties of a graph.  For example, the rows of the matrix $\Rf$ form a basis for the cut-space of the graph \cite{Godsil2001}.  This matrix is also intimately related to the notion of effective resistance of a graph, which will be discussed in the sequel.  Corollary \ref{cor:inertia} thus shows that studying the definiteness of the weighted Laplacian can be reduced to studying the matrix $\Rf W \Rf^T$ which contains in a more explicit way information on how both the location and magnitude of negative weight edges influence it spectral properties.
%%%%%%%%%%%%%%%%%%%%%%%
%\section{Signed Graphs, Effective Resistance, and the Definiteness of the Weighted Laplacian}\label{sec:definiteness}
\section{Effective Resistance and the\\ Definiteness of the Weighted Laplacian}\label{sec:definiteness}

The results of Section \ref{sec:signature} reveal that $\sigma(L(\mc{G}))$ is related to $\sigma(\Rf W \Rf^T)$.  In this section, we exploit the structure of this matrix to show how the negative edge weights affect the definiteness of the weighted Laplacian.  
The derived conditions turn out to be related to the notion of the \emph{effective resistance} of a graph.

It is well known that the weighted Laplacian of a graph can be interpreted as a resistor network \cite{Klein1993}.  %A useful interpretation that can be assigned to the weighted graph Laplacian matrix is as a resistor network.  
Each edge in the network can be thought of as a resistor with resistance equal to the inverse of the edge weight, $r_k = \mc{W}(k)^{-1}=w_k^{-1}$ for $k \in \mc{E}$.\footnote{Thus, the edge weight $w_k$ can be interpreted as an \emph{admittance}.}  The resistance between any two pairs of nodes can be determined using standard methods from electrical network theory \cite{Klein1993}.  It may also be computed using the Moore-Penrose pseudo-inverse of the graph Laplacian, denoted $L(\mc{G})^{\dagger}$.%\footnote{The Moore-Penrose pseudo-inverse $A^{\dagger}$ of a matrix $A$ satisfies the following properties : i) $AA^{\dagger}A=A^{\dagger}$, ii) $A^{\dagger}AA^{\dagger}=A$, iii) $(AA^{\dagger})^T=AA^{\dagger}$, and iv) $(A^{\dagger}A)^T=A^{\dagger}A$.}

\begin{definition}[\cite{Klein1993}]\label{def:effec_resistance}
The effective resistance between nodes $u,v \in \mc{V}$ in a weighted graph, denoted  $\mc{R}_{uv}(\mc{G})$, is 
\beas
\mc{R}_{uv}(\mc{G}) &=& ({\bf e}_u - {\bf e}_v)^TL^{\dagger}(\mc{G})({\bf e}_u - {\bf e}_v) \\
&=& [L^{\dagger}(\mc{G})]_{uu}-2[L^{\dagger}(\mc{G})]_{uv}+[L^{\dagger}(\mc{G})]_{vv},
\eeas
where ${\bf e}_u$ is the indicator vector for node $u$, that is ${\bf e}_u = 1$ in the $u$ position and 0 elsewhere.
\end{definition}

Our first result shows how the effective resistance between two nodes is related to the matrix $\R W \R$.  In this direction, we first derive an expression for the pseudo-inverse of the graph Laplacian using the essential edge Laplacian matrix.  
\begin{proposition}\label{prop:lap_pseudoinv}
Let $\mc{G}$ be a connected graph and assume $\sigma(L(\mc{G}))=(n_+,n_-,1)$.  Then the pseudo-inverse of the weighted graph Laplacian can be expressed as
\bea\label{lap_pseudoinv}
%L^{\dagger}(\mc{G})= \Etree \left(\Etree^T\Etree\right)^{-1}\left(\R W\R^T\right)^{-1}\left(\Etree^T\Etree \right)^{-1}\Etree^T
L^{\dagger}(\mc{G})&=& (\Etree^L)^T\left(\R W\R^T\right)^{-1}\Etree^L \nonumber \\
&=& (\Etree^L)^{T} L_{\ms ess}(\mc{T})^{-1}\Etree^T,
\eea
where $\Etree^L = L_e(\mc{T})^{-1}\Etree^T$ is the \emph{left-inverse} of $\Etree$.
\end{proposition}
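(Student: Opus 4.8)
The plan is to verify the claimed identity by checking the three defining properties of the Moore--Penrose pseudo-inverse, or — what I expect to be cleaner — by working in the coordinates provided by Proposition~\ref{prop:edgelap_sim2}. First I would recall that for a connected graph with $\sigma(L(\mc{G}))=(n_+,n_-,1)$, Proposition~\ref{prop:edgelap_sim2} gives a similarity $S$ with $S^{-1}L(\mc{G})S = \mathrm{diag}(L_{\ms ess}(\mc{T}),0)$, where $S = \leftm{cc} \Etree & N_{\ms \mc{T}} \rightm$ and $L_{\ms ess}(\mc{T}) = L_e(\mc{T})\R W \R^T$ is invertible (its invertibility follows from Corollary~\ref{cor:inertia}, since the single zero eigenvalue of $L(\mc{G})$ is accounted for by the trailing $0$ block). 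Since $L(\mc{G})$ is symmetric, its pseudo-inverse is the unique symmetric matrix that inverts $L(\mc{G})$ on the range $\mc{R}(L(\mc{G})) = \mathrm{span}(\1)^\perp$ and vanishes on $\mc{N}(L(\mc{G})) = \mathrm{span}(\1)$. So the strategy is: (i) show the right-hand side of (\ref{lap_pseudoinv}) is symmetric; (ii) show it annihilates $\1$; and (iii) show that composed with $L(\mc{G})$ it acts as the identity on $\mathrm{span}(\1)^\perp$.

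For step (i), symmetry of $(\Etree^L)^T(\R W\R^T)^{-1}\Etree^L$ is immediate once one notes $\R W\R^T$ is symmetric, hence so is its inverse. The equality of the two expressions in (\ref{lap_pseudoinv}) is the identity $(\R W\R^T)^{-1}\Etree^L = L_{\ms ess}(\mc{T})^{-1}\Etree^T$; since $\Etree^L = L_e(\mc{T})^{-1}\Etree^T$ and $L_{\ms ess}(\mc{T}) = L_e(\mc{T})\R W \R^T$, this is just $(\R W\R^T)^{-1}L_e(\mc{T})^{-1} = (L_e(\mc{T})\R W\R^T)^{-1}$, a routine rearrangement. For step (ii), I would use $\Etree^T \1 = 0$ (the incidence-matrix identity restricted to the tree — the tree spans all of $\mc{V}$), so the second expression in (\ref{lap_pseudoinv}) visibly kills $\1$. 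For step (iii), I would compute $L(\mc{G})\,(\Etree^L)^{T}L_{\ms ess}(\mc{T})^{-1}\Etree^T = EWE^T(\Etree^L)^{T}L_{\ms ess}(\mc{T})^{-1}\Etree^T$. The key sub-step is to show this equals the orthogonal projector $I - \frac{1}{|\mc{V}|}\1\1^T$ onto $\mathrm{span}(\1)^\perp$; equivalently, that $EWE^T(\Etree^L)^{T}L_{\ms ess}(\mc{T})^{-1}\Etree^T \Etree = \Etree$ (using that $\mc{R}(\Etree) = \mathrm{span}(\1)^\perp$ since $\mc{T}$ is a spanning tree and $\Etree$ has rank $|\mc{V}|-1$) and that the right-hand side annihilates $\1$, which is step (ii) again.

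The main obstacle is the computation in step (iii): unwinding $EWE^T(\Etree^L)^{T}$ into something proportional to $\Etree$. Here I would use the block structure $E = \leftm{cc}\Etree & \Ecycle\rightm$, $W = \mathrm{diag}(W_\tau, W_c)$, and the definition $\R = \leftm{cc} I & \Tcycle \rightm$ with $\Tcycle = L_e(\mc{T})^{-1}\Etree^T\Ecycle$. Writing $EWE^T = \Etree W_\tau \Etree^T + \Ecycle W_c \Ecycle^T$ and using $\Ecycle = \Etree (\Etree^L \Ecycle) = \Etree L_e(\mc{T})^{-1}\Etree^T\Ecycle = \Etree \Tcycle$ (the cycle edges are expressible in the tree's edge coordinates, a standard fact recorded in \cite{Zelazo2009b}), one gets $EWE^T = \Etree(W_\tau + \Tcycle W_c \Tcycle^T)\Etree^T = \Etree \R W \R^T \Etree^T$. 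Then, since $\Etree^L \Etree = L_e(\mc{T})^{-1}\Etree^T\Etree = L_e(\mc{T})^{-1}L_e(\mc{T}) = I$ — wait, this needs care: $\Etree^L$ as defined is $L_e(\mc{T})^{-1}\Etree^T$, and $L_e(\mc{T}) = W_\tau^{1/2}\Etree^T\Etree W_\tau^{1/2}$, so in fact the clean identity is $\Etree^L \Etree = L_e(\mc{T})^{-1}\Etree^T\Etree$, which is not obviously $I$; I would instead work with $\Etree^T (\Etree^L)^T = \Etree^T \Etree L_e(\mc{T})^{-1}$ and track the weight factors carefully, or equivalently absorb them by noting $L_e(\mc{T})^{-1}\Etree^T\Etree = W_\tau^{-1/2}(\Etree^T\Etree)^{-1}W_\tau^{-1/2}\Etree^T\Etree$ and that $\Etree^T\Etree$ is invertible for a tree. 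Substituting $EWE^T = \Etree \R W\R^T\Etree^T$ into the product collapses the $\R W\R^T$ and its inverse, leaving $\Etree (\Etree^T\Etree)^{-1}\Etree^T = $ the projector onto $\mc{R}(\Etree)=\mathrm{span}(\1)^\perp$, which is exactly $L(\mc{G})L^{\dagger}(\mc{G})$. Combined with steps (i)--(ii), this pins down the right-hand side of (\ref{lap_pseudoinv}) as $L^{\dagger}(\mc{G})$.
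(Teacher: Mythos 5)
Your argument is correct, and it rests on the same two ingredients as the paper's proof --- the invertibility of $\R W\R^T$ (equivalently of $L_{\ms ess}(\mc{T})$, via Theorem~\ref{thm:signature1}) and the factorization $E=\Etree\R$, hence $EWE^T=\Etree\R W\R^T\Etree^T$ --- but the logical route is genuinely different. The paper conjugates: from $L(\mc{G})=S\,\mbox{\bf diag}(L_{\ms ess}(\mc{T}),0)\,S^{-1}$ it writes $L^{\dagger}(\mc{G})=S\,\mbox{\bf diag}(L_{\ms ess}(\mc{T})^{-1},0)\,S^{-1}$ and reads off (\ref{lap_pseudoinv}). That step is not automatic: for a general invertible $S$, the matrix $S D^{\dagger} S^{-1}$ is only a reflexive generalized inverse of $SDS^{-1}$, not the Moore--Penrose inverse; it is the Moore--Penrose inverse here only because the column spaces of $\Etree$ and $N_{\ms \mc{T}}$ are the orthogonal complements $\ones^{\perp}$ and $\myspan(\ones)$, so that $S\,\mbox{\bf diag}(I,0)\,S^{-1}=\Etree(\Etree^T\Etree)^{-1}\Etree^T$ is an orthogonal projector. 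Your steps (i)--(iii) --- symmetry, annihilation of $\ones$, and $L(\mc{G})X=\Etree(\Etree^T\Etree)^{-1}\Etree^T=I-\tfrac{1}{|\mc{V}|}\ones\ones^T$ --- verify precisely this, so your version supplies the justification that the paper compresses into ``follows directly.'' The convention worry you flag about whether $\Etree^L\Etree=I$ resolves in your favor: the proposition itself declares $\Etree^L$ to be a left-inverse of $\Etree$, which forces $L_e(\mc{T})$ in these formulas to be read as the unweighted tree edge Laplacian $\Etree^T\Etree$ (the same reading needed for $\Etree\Tcycle=\Ecycle$ to hold); with that reading your computation in step (iii) collapses cleanly and no weight-tracking is needed. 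What your approach buys is a self-contained verification of the Moore--Penrose axioms; what the paper's buys is brevity and direct reuse of the block structure from Proposition~\ref{prop:edgelap_sim2}.
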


\begin{proof}
From Theorem \ref{thm:signature1} we conclude the essential edge Laplacian is invertible and it follows that 
$$ L_{\ms ess}(\mc{T})^{-1} = \left(\R W\R^T\right)^{-1}L_e(\mc{T})^{-1},$$
and
%\beas
%\leftm{cc} L_{\ms ess}(\mc{T}) & {\bf 0} \\ {\bf 0} & 0 \rightm^{\dagger} &=&\leftm{cc}  \left(\R W\R^T\right)^{-1}L_e(\mc{T})^{-1} & {\bf 0} \\ {\bf 0} & 0 \rightm. 
%\eeas
%Finally, we have that
$$L^{\dagger}(\mc{G}) = S \leftm{cc} \left(\R W\R^T\right)^{-1}L_e(\mc{T})^{-1} & {\bf 0} \\ {\bf 0} & 0 \rightm S^{-1},$$
where $S$ is the transformation matrix defined in Proposition \ref{prop:edgelap_sim2}, and (\ref{lap_pseudoinv}) follows directly. \QEDclosed
\end{proof}

From Proposition \ref{prop:lap_pseudoinv}, it is clear that the effective resistance between nodes $u,v \in \mc{V}$ can be expressed as
{\small
$$ \mc{R}_{uv}(\mc{G}) =\hspace{-2pt} ({\bf e}_u - {\bf e}_v)^T(\Etree^L)^T \left(\hspace{-2pt}\R W\R^T\hspace{-2pt}\right)^{-1}\hspace{-5pt}\Etree^L({\bf e}_u - {\bf e}_v).$$
}
We now show that this equivalent characterization of the effective resistance is useful for understanding the definiteness of the weighted Laplacian.

\begin{figure}[!t]
\begin{center}
\includegraphics[width=0.5\textwidth]{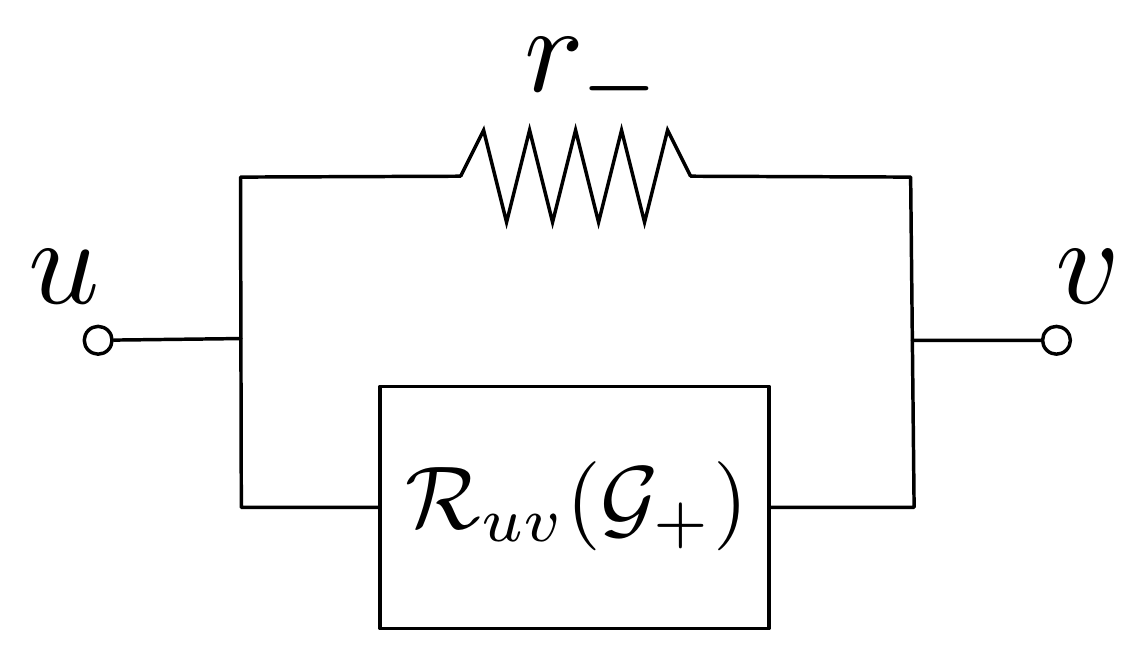}
%\vspace{-10pt}
\caption{Resistive network interpretation with one negative weight edge.}
\label{fig.effectiveresistance}
\end{center}
%\vspace{-20pt}
\end{figure}

\begin{theorem}\label{thm:one_negedge}
Assume that $\mc{G} = (\mc{V},\mc{E},\mc{W})$ has one edge with a negative weight, $e_-=(u,v) \in \mc{E}$.  Let $\mc{G}_+=(\mc{V},\mc{E}\setminus \{e_-\}, \mc{W})$ and $\mc{G}_- =(\mc{V},e_-,\mc{W})$  and assume $\mc{G}_+$ is connected.  Furthermore, let $\mc{R}_{uv}(\mc{G}_+)$ denote the effective resistance between nodes $u,v \in \mc{V}$ over the graph $\mc{G}_+$.  Then $L(\mc{G})$ is positive semi-definite if and only if $|\mc{W}(e_{-})| \leq \mc{R}_{uv}^{-1}(\mc{G}_+)$.
\end{theorem}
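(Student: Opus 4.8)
The plan is to reduce the definiteness question to a scalar condition by exploiting Corollary~\ref{cor:inertia} together with the structure of the rank-one perturbation induced by the single negative edge. First I would pick a spanning tree $\mc{T}$ of $\mc{G}_+$; since $\mc{G}_+$ is connected, this is also a spanning tree of $\mc{G}$, and I may label the edges so that the negative edge $e_-$ lies in the cycle subgraph $\mc{C}$ (indeed $e_-\notin\Etree$, so $e_-$ contributes one column to $\Ecycle$). With respect to this decomposition, $W = \mathrm{diag}(W_+, w_-)$ where $W_+\succ 0$ collects the positive weights and $w_- = \mc{W}(e_-)<0$, and the matrix $\R$ has a corresponding column block, say $\R = \leftm{cc} R_+ & \rho \rightm$ where $\rho = \Etree^L{}^{T}(\mathbf{e}_u-\mathbf{e}_v)$ is exactly the tree-path vector associated with $e_-$ (this is the column of $\R$ coming from $e_-$, equal to $L_e(\mc{T})^{-1}\Etree^T\Ecycle$ restricted to that edge). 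Then
\bea\label{eq:rank1split}
\R W \R^T = R_+ W_+ R_+^T + w_- \rho\rho^T.
\eea
Here $R_+ W_+ R_+^T$ is precisely the matrix $\R_{(+)}W_+\R_{(+)}^T$ associated with $\mc{G}_+$, which by Corollary~\ref{cor:inertia} (applied to the connected graph $\mc{G}_+$, whose Laplacian is PSD) is symmetric positive definite.

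The core step is then a standard rank-one update argument. By Corollary~\ref{cor:inertia}, $L(\mc{G})\succeq 0$ if and only if $\R W\R^T \succeq 0$, and since $R_+ W_+ R_+^T \succ 0$ and $w_-<0$, the matrix in~\eqref{eq:rank1split} is PSD if and only if $1 + w_-\,\rho^T (R_+ W_+ R_+^T)^{-1}\rho \ge 0$, i.e. $1 - |w_-|\,\rho^T (R_+ W_+ R_+^T)^{-1}\rho \ge 0$. (This follows from writing $R_+W_+R_+^T + w_-\rho\rho^T = M^{1/2}(I + w_- M^{-1/2}\rho\rho^T M^{-1/2})M^{1/2}$ with $M = R_+W_+R_+^T$, and noting the bracketed matrix is a rank-one perturbation of the identity whose only non-unit eigenvalue is $1 + w_-\|M^{-1/2}\rho\|^2$.) So the theorem reduces to the identity
\bea\label{eq:reseq}
\rho^T (R_+ W_+ R_+^T)^{-1}\rho = \mc{R}_{uv}(\mc{G}_+).
\eea

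To establish~\eqref{eq:reseq} I would invoke Proposition~\ref{prop:lap_pseudoinv} applied to the \emph{connected} graph $\mc{G}_+$: it gives $L^{\dagger}(\mc{G}_+) = (\Etree^L)^T (\R_{(+)}W_+\R_{(+)}^T)^{-1}\Etree^L = (\Etree^L)^T (R_+W_+R_+^T)^{-1}\Etree^L$ (using the same tree $\mc{T}$, which is a spanning tree of $\mc{G}_+$). Plugging $\mathbf{e}_u-\mathbf{e}_v$ into Definition~\ref{def:effec_resistance} and recognizing $\rho = \Etree^L(\mathbf{e}_u-\mathbf{e}_v)$ then yields $\mc{R}_{uv}(\mc{G}_+) = (\mathbf{e}_u-\mathbf{e}_v)^T(\Etree^L)^T(R_+W_+R_+^T)^{-1}\Etree^L(\mathbf{e}_u-\mathbf{e}_v) = \rho^T(R_+W_+R_+^T)^{-1}\rho$, which is~\eqref{eq:reseq}. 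Combining, $L(\mc{G})\succeq 0 \iff 1 - |w_-|\mc{R}_{uv}(\mc{G}_+)\ge 0 \iff |\mc{W}(e_-)|\le \mc{R}_{uv}^{-1}(\mc{G}_+)$.

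\emph{Main obstacle.} The routine linear algebra (rank-one update, Schur-type reasoning) is not the difficulty; the delicate point is the bookkeeping that identifies the $e_-$-column $\rho$ of $\R$ with the vector $\Etree^L(\mathbf{e}_u-\mathbf{e}_v)$ and the positive block $R_+W_+R_+^T$ with the $\mc{G}_+$ matrix $\R_{(+)}W_+\R_{(+)}^T$ — i.e. checking that the same spanning tree $\mc{T}$ can be used consistently for $\mc{G}$ and $\mc{G}_+$ and that the column/row partitions line up. One must also be slightly careful that Proposition~\ref{prop:lap_pseudoinv} requires $\sigma(L(\mc{G}_+)) = (n_+,n_-,1)$, which holds because $\mc{G}_+$ is connected with nonnegative (indeed positive) weights, so $n_-=0$ and $n_0=1$.
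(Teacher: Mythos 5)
Your proof is correct and arrives at exactly the same scalar condition as the paper, but by a different reduction. The paper stays at the node level throughout: it writes $L(\mc{G})= E_+W_+E_+^T - |\mc{W}(e_-)|E_-E_-^T$, embeds this in a bordered block matrix with $(1,1)$ entry $|\mc{W}(e_-)|^{-1}$ and off-diagonal blocks $E_-$, compresses the $(2,2)$ block to $\Rtplus W_+\Rtplus^T$ by a congruence with $\leftm{cc} (\Etreep^L)^T & \ones \rightm$, and then takes a second Schur complement to obtain $|\mc{W}(e_-)|^{-1} - E_-^T(\Etreep^L)^T(\Rtplus W_+\Rtplus^T)^{-1}\Etreep^L E_- \geq 0$. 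You instead pass to the edge space at the outset via Corollary~\ref{cor:inertia}, split $\R W\R^T$ into a positive-definite part plus the rank-one term $w_-\rho\rho^T$, and apply the standard rank-one-update eigenvalue lemma. The two arguments are algebraically equivalent --- the double Schur complement is the rank-one update in bordered form, and both terminate in the same identity $\rho^T(\Rtplus W_+\Rtplus^T)^{-1}\rho=\mc{R}_{uv}(\mc{G}_+)$ established through Proposition~\ref{prop:lap_pseudoinv} --- but your version makes explicit that the definiteness question for $L(\mc{G})$ is being decided on $\R W\R^T$, tying the theorem more directly to Section~\ref{sec:signature}, while the paper's bordered form carries over verbatim to the multi-edge setting of Theorem~\ref{thm:multiple_edge}, where the $(1,1)$ block $|W_-|^{-1}$ is diagonal rather than scalar. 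Your bookkeeping (same spanning tree for $\mc{G}$ and $\mc{G}_+$, $e_-$ placed in the cycle subgraph, $R_+=\Rtplus$) is sound; the only slip is the first expression you give for $\rho$, namely $(\Etree^L)^T(\mathbf{e}_u-\mathbf{e}_v)$, which is dimensionally inconsistent --- the correct $\rho=\Etree^L(\mathbf{e}_u-\mathbf{e}_v)$ appears later and is what your argument actually uses.
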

\begin{proof}
Denote by $E_-$ the incidence matrix of $\mc{G}_-$, and $E_+ = \Etreep\Rtplus $ the incidence matrix of $\mc{G}_+ = \mc{T}_+ \cup \mc{C}_+$.  The Laplacian matrix can now be expressed as 
$$L(\mc{G})= \Etreep\Rtplus W_+ \Rtplus^T\Etreep^T - E_-|\mc{W}(e_-)|E_-^T.$$
By the Schur complement, $L(\mc{G}) \geq 0$ if and only if
$$ \leftm{cc} |\mc{W}(e_-)|^{-1} & E_-^T \\ E_- & \Etreep\Rtplus W_+ \Rtplus^T\Etreep^T \rightm \geq 0.$$
Applying a congruent transformation to the above matrix using 
\beas
S &=& \leftm{cc} I & 0 \\ 0 & \leftm{cc} (\Etreep^L)^T &\ones \rightm \rightm
\eeas
leads to the following LMI condition,
$$ \leftm{cc} |\mc{W}(e_{-})|^{-1} &  E_{\_}^T(\Etreep^L)^T  \\   \Etreep^L E_{-} &  \Rtplus W_+ \Rtplus^T   \rightm  \geq 0.$$
Applying again the Schur complement, we obtain the equivalent condition that the matrix
{\small
$$ |\mc{W}(e_{-})|^{-1} - E_{-}^T(\Etreep^L)^T(\Rtplus W_+ \Rtplus^T)^{-1}\Etreep^L E_{\_}$$
}
must also be positive semi-definite.
Observe now that 
{\small
$$E_{-}^T(\Etreep^L)^T(\Rtplus W_+ \Rtplus^T)^{-1}\Etreep^L E_{-} = \mc{R}_{uv}(\mc{G}_+).$$
}
This then leads to the desired conclusion that $ |\mc{W}(e_{-})|  \leq  \mc{R}_{uv}^{-1}(\mc{G}_+)$.\QEDclosed
\end{proof}

The above result has a very intuitive physical interpretation.  The entire network $\mc{G}_+$ can be considered as a single lumped resistor between nodes $u$ and $v$ with resistance $\mc{R}_{uv}(\mc{G}_+)$.  The negate-weight edge can thus be thought of adding another resistor in parallel between the nodes, as in Figure \ref{fig.effectiveresistance}.  The equivalent resistance between $u$ and $v$ is well-known to be 
$$\mc{R}_{uv}(\mc{G}) = \frac{\mc{R}_{uv}(\mc{G}_+)r_{-}}{\mc{R}_{uv}(\mc{G}_+)+r_{-}}.$$
If $r_{-}$ is a negative resistor, then choosing $r_{-} = -\mc{R}_{uv}(\mc{G}_+)$ corresponds to an equivalent resistance that is infinite, i.e., an \emph{open circuit}.  The open circuit can be thought of as a cut between the terminals $u$ and $v$.  %This interpretation is inline also with the results of Theorem \ref{thm:negativecuts}.  Indeed, in this case the graph $\mc{G}_+$ is not connected then the effective resistance between any two nodes connected by a negative edge weight over $\mc{G}_+$ is infinite, i.e., $\mc{R}_{uv}^{-1}(\mc{G}_+)=0$.

The result in Theorem \ref{thm:one_negedge} can be generalized to multiple negative weight edges with some additional assumptions on how those edges are distributed in the graph. In this direction, let $\mc{E}_-$ and $\mc{E}_+$ denote, respectively, the edges with negative and positive weights.
For each edge $k=(u,v)\in \mc{E}_-$, define the set $P_{k} \subseteq \mc{E}_+$ to be the set of all edges in $\mc{G}_+=(\mc{V},\mc{E}_+)$ that belong to a path connecting nodes $u$ to $v$, 
\bea\label{disjoint_paths}
P_{k} &=& \left\{ e \in \mc{E}_+ \, | \, k=(u,v)\in \mc{E}_-, \exists \mbox{ a path in } \mc{G}_+ \right. \nonumber \\ 
&&\left. \mbox{ from } u \mbox{ to } v \mbox{ using edge } e \right\}.
\eea
Let $\mc{G}_+(P_{k}) \subseteq \mc{G}_+$ be the subgraph induced by the edges in $P_{k}$.\footnote{Thus, $\mc{G}_+(P_{k}) = (\mc{V}(P_{k}),P_{k})$ where $\mc{V}(P_{k})\subseteq \mc{V}$ are the nodes incident to edges in $P_{k}$.}
Note that if $P_{k} \cap P_{{k'}} = \emptyset$ for edges with distinct nodes (i.e. $k=(u,v)$ and ${k'}=({u'},{v'}) \in \mc{E}_-$), then there exists no cycle in $\mc{G}_+$ containing the nodes $u,v,{u'},{v'}$.%  a set of edges in $\mc{E}_+$ and not in $P_{uv}\cup P_{\overline{u}\overline{v}}$ that forms a cut separating nodes $u$ and $v$ from $\overline{u}$ and $\overline{v}$. 
An important class of graphs that can admit such a partition are the \emph{cactus graphs} \cite{Markov2007}.  Using this characterization, the following statement on effective resistance with multiple negative weight edges can be stated as follows.

\begin{theorem}\label{thm:multiple_edge}
Assume that $\mc{G}_+$ is connected and $|\mc{E}_-|>1$.  Let $\mc{R}_{k}(\mc{G}_+)$ denote the effective resistance between nodes $u,v \in \mc{V}$ with $k=(u,v)\in \mc{E}_-$ over the graph $\mc{G}_+$, and let $\mc{\bf R} = \mbox{\bf diag}\{\mc{R}_1(\mc{G}_+),\ldots,\mc{R}_{|\mc{E}_-|}(\mc{G}_+)\}$.  Furthermore, assume that $P_i \cap P_j = \emptyset$ for all $i,j \in \mc{E}_-$, where $P_i$ is defined in (\ref{disjoint_paths}).  Then the weighted Laplacian is positive semi-definite if and only if $|W_-| \leq \mc{\bf R}^{-1}$.%$|W_-| \leq (\max_k \mc{R}_k(\mc{G}_+))^{-1}I$.
\end{theorem}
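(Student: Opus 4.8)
The plan is to generalize the proof of Theorem~\ref{thm:one_negedge} essentially verbatim, the only change being that $E_-$ is now the $|\mc{V}|\times|\mc{E}_-|$ incidence matrix of \emph{all} the negatively weighted edges and $|W_-| = -W_- \succ 0$ collects their magnitudes, so that $L(\mc{G}) = \Etreep\Rtplus W_+\Rtplus^T\Etreep^T - E_-|W_-|E_-^T = L(\mc{G}_+) - E_-|W_-|E_-^T$. Running the Schur complement on $|W_-|^{-1}$, then the congruent transformation $S = \mathrm{diag}(I, [(\Etreep^L)^T\ \ones])$, then the Schur complement on $\Rtplus W_+\Rtplus^T$ (which is positive definite by Corollary~\ref{cor:inertia}), exactly as in Theorem~\ref{thm:one_negedge}, and using Proposition~\ref{prop:lap_pseudoinv} to identify $(\Etreep^L)^T(\Rtplus W_+\Rtplus^T)^{-1}\Etreep^L = L^{\dagger}(\mc{G}_+)$, one reduces $L(\mc{G})\succeq 0$ to the matrix inequality
\[
|W_-|^{-1} - M \succeq 0, \qquad M := E_-^T L^{\dagger}(\mc{G}_+)\,E_- \in \reals^{|\mc{E}_-|\times|\mc{E}_-|}.
\]
Here $[M]_{k,k'} = ({\bf e}_u-{\bf e}_v)^T L^{\dagger}(\mc{G}_+)({\bf e}_{u'}-{\bf e}_{v'})$ for $k=(u,v)$ and $k'=(u',v')$, and by Definition~\ref{def:effec_resistance} the diagonal entry $[M]_{kk}$ equals $\mc{R}_k(\mc{G}_+)$. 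Hence the theorem follows once we show that the disjointness hypothesis forces $M = \mc{\bf R}$, i.e.\ the off-diagonal entries of $M$ vanish.

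To kill the off-diagonal entries I would move to the resistor-network picture. For $k=(u,v)\in\mc{E}_-$ let $\phi^{(k)} := L^{\dagger}(\mc{G}_+)({\bf e}_u-{\bf e}_v)$ be the node potentials induced by driving a unit current from $u$ to $v$ in $\mc{G}_+$, and let $f^{(k)} := W_+E_+^T\phi^{(k)}$ be the induced branch currents, a flow on $\mc{G}_+$ with divergence ${\bf e}_u-{\bf e}_v$. Using $({\bf e}_u-{\bf e}_v) = L(\mc{G}_+)\phi^{(k)}$ and $r_e f^{(k)}_e = (E_+^T\phi^{(k)})_e$ with $r_e = w_e^{-1}$, a short computation yields the reciprocity identity
\[
[M]_{k,k'} = (\phi^{(k)})^T L(\mc{G}_+)\,\phi^{(k')} = \sum_{e\in\mc{E}_+} r_e\, f^{(k)}_e\, f^{(k')}_e .
\]
It therefore suffices to prove that $f^{(k)}$ is supported on $P_k$ for every $k\in\mc{E}_-$: since $P_k\cap P_{k'}=\emptyset$ for $k\neq k'$, this makes $f^{(k)}_e f^{(k')}_e = 0$ for all $e$, so the sum vanishes.

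The support lemma is the crux. Because $r_e f^{(k)}_e = (E_+^T\phi^{(k)})_e$ is a vector of potential differences, it is annihilated by every cycle vector of $\mc{G}_+$; equivalently $f^{(k)}$ obeys Kirchhoff's voltage law, $\sum_{e\in O} r_e f^{(k)}_e = 0$ around every cycle $O$ (with signs according to orientation). Orienting each current-carrying edge in its flow direction, if the resulting directed support contained a directed cycle $O$ then $r_e f^{(k)}_e>0$ for every $e\in O$ and the Kirchhoff sum would be strictly positive, a contradiction; hence the directed support is acyclic. The flow-decomposition theorem then writes $f^{(k)} = \sum_Q \alpha_Q\,\ones_Q$ with $\alpha_Q>0$ over simple directed $u$--$v$ paths $Q$, each contained in the support. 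Every such $Q$ is a simple $u$--$v$ path in $\mc{G}_+$, so $Q\subseteq P_k$ by \eqref{disjoint_paths}, and therefore $\mathrm{supp}(f^{(k)})\subseteq P_k$. Feeding this back gives $M = \mc{\bf R}$; and since $|W_-|^{-1}$ and $\mc{\bf R}$ are diagonal and positive, $|W_-|^{-1} - \mc{\bf R}\succeq 0$ is equivalent to the entrywise bound $|W_-|\le\mc{\bf R}^{-1}$.

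I expect the main obstacle to be precisely this flow-support lemma — rigorously arguing that the electrical current between two nodes lives only on edges lying on some $u$--$v$ path — since it is the one ingredient absent from the single-edge proof; the Schur complement/congruence reductions and the reciprocity identity are routine once $E_-$ is allowed to be a block rather than a column, and the diagonality of $M$ is exactly what decouples the $|\mc{E}_-|$ conditions into the componentwise inequality $|W_-|\le\mc{\bf R}^{-1}$.
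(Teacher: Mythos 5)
Your proposal is correct, and its overall skeleton coincides with the paper's: both reduce $L(\mc{G})\geq 0$ via the same Schur-complement/congruence chain to the LMI $|W_-|^{-1}-E_-^T(\Etreep^L)^T(\Rtplus W_+\Rtplus^T)^{-1}\Etreep^LE_-\geq 0$ and then show that the subtracted matrix is exactly $\mbox{\bf diag}\{\mc{R}_1(\mc{G}_+),\ldots,\mc{R}_{|\mc{E}_-|}(\mc{G}_+)\}$, so that the matrix inequality decouples entrywise. Where you diverge is in the proof of diagonality, which is the crux in both treatments. The paper argues combinatorially: $\Etreep^LE_-$ is the $\{0,\pm1\}$ fundamental-cycle matrix recording which tree edges complete a cycle with each negative edge, and the disjointness $P_i\cap P_j=\emptyset$ forces its columns to have non-overlapping supports (edges of $\mc{T}_+\cap\mc{G}_+(P_k)$), so the quadratic form block-diagonalizes; this is stated rather tersely (``it can be verified''). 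You instead identify the off-diagonal entry as the reciprocity sum $\sum_e r_ef^{(k)}_ef^{(k')}_e$ of two unit current flows and prove a flow-support lemma (acyclicity of the current via Kirchhoff's voltage law, then flow decomposition into simple $u$--$v$ paths) showing $\mathrm{supp}(f^{(k)})\subseteq P_k$. Your route is somewhat longer but supplies a complete and self-contained justification of exactly the step the paper leaves to the reader, and the electrical interpretation makes transparent why only the disjointness of the path sets $P_k$ (and not, say, a tree/cotree bookkeeping) is what matters. Both arguments rely on the positivity of the weights in $\mc{E}_+$ (yours through $r_e>0$ in the Kirchhoff contradiction), and both conclude identically from the diagonal LMI.
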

\begin{proof}
As in the proof of Theorem \ref{thm:one_negedge}, we consider the LMI
{\small
%\bea\label{schurlmi}
$$|W_-|^{-1} - E_{-}^T(\Etreep^L)^T(\Rtplus W_+ \Rtplus^T)^{-1}\Etreep^L E_{-} \geq 0 $$
%\eea
}
Due to the location of the negative weight edges assumed in the graph, it can be verified that the matrix $E_{-}^T(\Etreep^L)^T(\Rtplus W_+ \Rtplus^T)^{-1}\Etreep^L E_{-}$ is in fact a diagonal matrix with $\mc{R}_{k}(\mc{G}_+)$ for $k=1,\ldots, |\mc{E}_-|$ on the diagonal, denoted as ${\bf R}$.  To see this, observe that $\Etreep^LE_{-}$ is a $\{0,\pm 1\}$ matrix that describes which edges in the spanning tree $\mc{T}_+$ can create a cycle with each edge in $\mc{E}_{-}$ (this is related to the matrix $\Tcycle$ used in Proposition \ref{prop:edgelap_sim2} since $\mc{T}_+=\mc{T}$ and therefore $\mc{E}_- \subseteq \mc{E}_c$).  Observe also that an edge $k \in \mc{E}_-$ can only be incident to nodes in the subgraph $\mc{G}_+(P_k)$.  Therefore, the matrix $\Etreep^LE_{-}$ has a partitioned structure (after a suitable relabeling of the edges) such that the $k$th column of $\Etreep^LE_{-}$ will contain non-zero elements corresponding to edges in $\mc{T}_+ \cap \mc{G}_+(P_k)$.

The LMI condition can now be expressed as $|W_-|^{-1} \geq {\bf R}$ which implies that $|W_-| \leq  {\bf R}^{-1}$ concluding the proof.\QEDclosed
\end{proof}

Theorem \ref{thm:multiple_edge} also has the same physical interpretation as Theorem \ref{thm:one_negedge}.  Indeed, the resistance between two nodes contained in a sub-graph $\mc{G}_+(P_k)$ is not determined by any other edges in the network.  Both Theorems \ref{thm:one_negedge} and \ref{thm:multiple_edge} provide a clear characterization of how negative weight edges can impact the definiteness of the weighted Laplacian, and how that is related to the effective resistance in the graph.  In fact, we also can observe an additional property relating the total effective resistance between all nodes incident to edges in $\mc{E}_-$ and the definiteness of the graph, independent of the actual location of these edges in the network.

\begin{corollary}\label{cor:total_resist}
Assume that $\mc{G}_+$ is connected.  If the weighted Laplacian is positive semi-definite, then  
$$ \sum_{k \in \mc{E}_-} |\mc{W}(k)|^{-1} \geq {\bf R}_{tot},$$
where 
{\small 
$${\bf R}_{tot}\hspace{-3pt}=\hspace{-3pt}\mbox{\bf trace}\left[ E_{-}^T(\Etreep^L)^T(\Rtplus W_+ \Rtplus^T)^{-1}\Etreep^L E_{-}\right] .$$
}
\end{corollary}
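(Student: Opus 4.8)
The plan is to follow the same line of reasoning as in the proof of Theorem~\ref{thm:multiple_edge}, but now extract a scalar (trace-based) inequality rather than the diagonal matrix identity. First I would recall from Theorem~\ref{thm:one_negedge} (and its multi-edge analog) that positive semi-definiteness of $L(\mc{G})$ is equivalent, via two applications of the Schur complement and the congruence transformation $S$, to the LMI
$$ |W_-|^{-1} - E_{-}^T(\Etreep^L)^T(\Rtplus W_+ \Rtplus^T)^{-1}\Etreep^L E_{-} \geq 0. $$
Crucially, this equivalence does \emph{not} require the disjoint-path assumption $P_i \cap P_j = \emptyset$; that assumption was only needed in Theorem~\ref{thm:multiple_edge} to force the second term to be exactly diagonal. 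Without it, the matrix $M := E_{-}^T(\Etreep^L)^T(\Rtplus W_+ \Rtplus^T)^{-1}\Etreep^L E_{-}$ is merely symmetric positive semi-definite, and its trace is precisely ${\bf R}_{tot}$ as defined in the statement.

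Next I would take the trace of both sides of the LMI. Since $L(\mc{G}) \geq 0$ gives $|W_-|^{-1} - M \geq 0$, and the trace of a positive semi-definite matrix is nonnegative, we get $\mbox{\bf trace}(|W_-|^{-1}) \geq \mbox{\bf trace}(M)$. The left-hand side is $\sum_{k \in \mc{E}_-} |\mc{W}(k)|^{-1}$ because $|W_-|$ is the diagonal matrix of negative-edge weight magnitudes, so $|W_-|^{-1}$ is diagonal with entries $|\mc{W}(k)|^{-1}$. The right-hand side is ${\bf R}_{tot}$ by definition. This yields exactly the claimed inequality $\sum_{k \in \mc{E}_-} |\mc{W}(k)|^{-1} \geq {\bf R}_{tot}$.

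The only subtle point worth spelling out — and the part most likely to need a careful sentence — is the interpretation of ${\bf R}_{tot}$ as a sum of effective resistances. Each diagonal entry $[M]_{kk}$ equals $E_-\mathbf{e}_k^T \cdots$ evaluated exactly as in the single-edge computation of Theorem~\ref{thm:one_negedge}, namely $\mathbf{e}_k^T E_-^T (\Etreep^L)^T (\Rtplus W_+ \Rtplus^T)^{-1} \Etreep^L E_- \mathbf{e}_k = \mc{R}_{u_k v_k}(\mc{G}_+)$ where $k = (u_k, v_k)$, using Proposition~\ref{prop:lap_pseudoinv} and the identity $E_{-}^T(\Etreep^L)^T(\Rtplus W_+ \Rtplus^T)^{-1}\Etreep^L E_{-} = \mc{R}_{uv}(\mc{G}_+)$ established there. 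Hence $\mbox{\bf trace}(M) = \sum_{k \in \mc{E}_-} \mc{R}_k(\mc{G}_+)$, which is the "total effective resistance" over all negative edges, and this identity holds \emph{regardless} of how the negative edges are placed in the graph — the off-diagonal entries of $M$ simply do not contribute to the trace. I do not anticipate a serious obstacle here; the main thing is to state clearly that we are only using one direction (necessity) and that dropping the disjointness hypothesis costs us only the matrix-level conclusion, not the trace-level one.
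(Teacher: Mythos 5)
Your proposal is correct and follows exactly the route the paper intends: the corollary is the trace of the LMI $|W_-|^{-1} - E_{-}^T(\Etreep^L)^T(\Rtplus W_+ \Rtplus^T)^{-1}\Etreep^L E_{-} \geq 0$ derived in the proofs of Theorems \ref{thm:one_negedge} and \ref{thm:multiple_edge}, using only the necessity direction and the fact that the disjoint-path hypothesis is irrelevant to a trace inequality. Your added observation that the diagonal entries of that matrix are the individual effective resistances (via Proposition \ref{prop:lap_pseudoinv}), so ${\bf R}_{tot}=\sum_{k\in\mc{E}_-}\mc{R}_k(\mc{G}_+)$, is also correct and consistent with the paper's interpretation of ${\bf R}_{tot}$ as a total effective resistance.
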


Corollary \ref{cor:total_resist} indicates that a weighted Laplacian with negative weights can still be positive semi-definite, and in that case the total magnitude of the negative weight edges is closely related to the total effective resistance in the network (defined over the nodes incident to $\mc{E}_-$). The notion of total effective resistance has also appeared in works characterizing the $\mc{H}_2$ performance of certain multi-agent networks \cite{Bamieh2009, Barooah2006a, Siami2013}.  While Corollary \ref{cor:total_resist} only provides a sufficient condition for the definiteness of the weighted Laplacian, it nevertheless reinforces its connection to the notion of effective resistance.

\section{Clustering with Negative Weights}\label{sec:example}

\begin{figure}[!t]
\begin{center}
	%\subfigure[]{\scalebox{.4}{\includegraphics{graph_0cycle.pdf}}\label{fig:nocycle}}
	\subfigure[]{\scalebox{.6}{\includegraphics{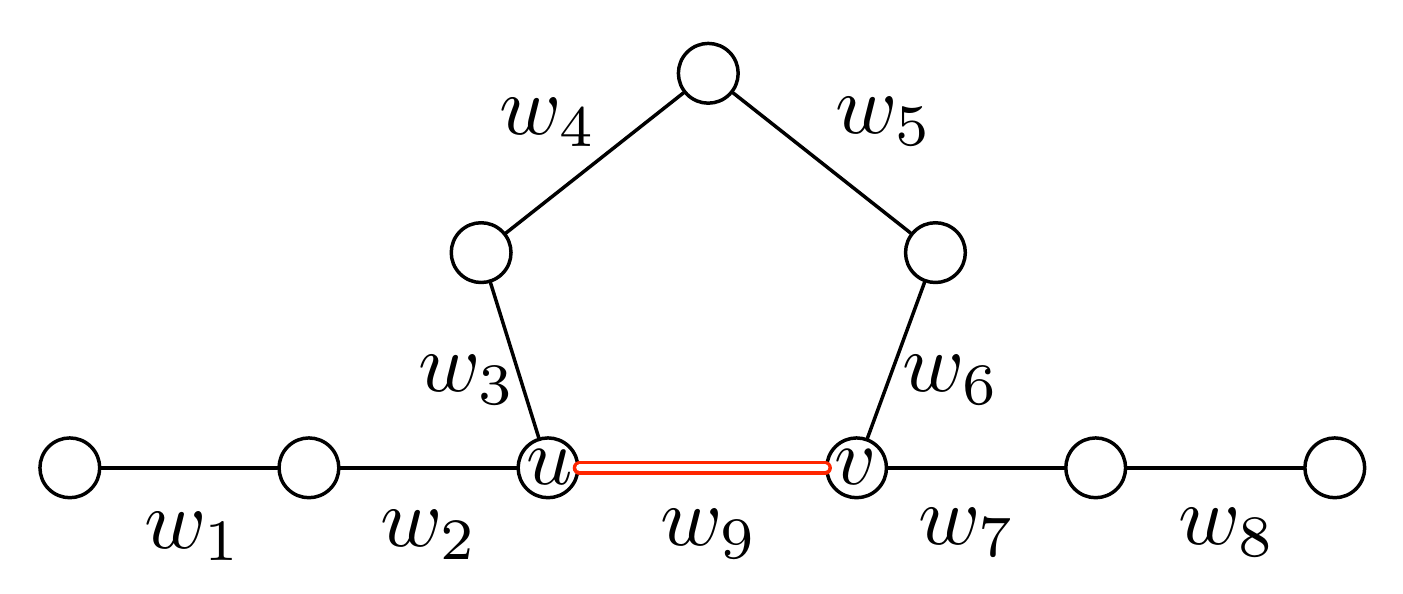}}\label{fig:cycle}}
\caption{A graph without and with a negative weight edge (in red/double line).}
\label{fig.graph1cycle}
\end{center}
%\vspace{-20pt}
\end{figure} 

In this section we demonstrate how the previous results can be used to design edge weights for a linear weighted consensus protocol that results in bounded trajectories that are \emph{clustering}.  That is, the agents comprising the system form clusters, and agents within a single cluster reach agreement on a common state that is different than agents in other clusters.

In this direction, we consider the linear weighted consensus protocol over a weighted and undirected graph on $|\mc{V}|=n$ nodes, $\mc{G}=(\mc{V},\mc{E},\mc{W})$ \cite{Mesbahi2010},
%Each agent evolves according to the following protocol,
%$$\dot{x}_i(t) = \sum_{j, \, (i,j)\in\mc{E}} w_{ij}(x_j(t)-x_i(t)).$$
%This protocol is the well known linear consensus protocol, and can be expressed in the compact form as 
\bea\label{consensus}
\dot{x}(t) = -L(\mc{G})x(t).
\eea

As an illustrative example, consider the graph in Figure \ref{fig.graph1cycle} (without the edge $(u,v)$) with edge weights $w_i=1, i=1,\ldots,8$.  It can be verified that the effective resistance between nodes $u$ and $v$ is $\mc{R}_{uv}(\mc{G}) = 4$.  Consider now the graph $\mc{\hat{G}}=(\mc{V},\mc{E} \cup \{(u,v)\},\hat{\mc{W}})$, and assume that the added edge has a negative weight (i.e., $w_{uv}=\hat{\mc{W}}((u,v))<0$). Theorem \ref{thm:one_negedge} can now be used to conclude that any edge weight satisfying $w_{uv} \geq -0.25 $ guarantees that the weighted Laplacian will be positive semi-definite.  

In the context of the weighted consensus protocol, this result can be used to produce very different trajectories of the system.  For example, even in the presence of a negative weight, the agreement protocol over the graph $\hat{\mc{G}}$ can still reach agreement among all agents.  Figure \ref{fig:sync_negweight} demonstrates this using $w_{uv} = -0.1$ as the weight, and $\sigma(L(\hat{\mc{G}}))=(8,0,1)$.  More interesting are the trajectories generated by the consensus protocol when the negative weight edge is exactly matched to the effective resistance between the incident nodes.  Figure \ref{fig:cluster_negweight} shows the trajectories of the system for edge weight $w_{uv}=-0.25$.  In this case it can be verified that $L(\hat{\mc{G}})$ is still positive semi-definte, but the multiplicity of the zero-eigenvalue has increased, i.e., $\sigma(L(\hat{\mc{G}}))=(7,0,2)$.  The trajectories generate a clear clustered structure.

\begin{figure}[!t]
\begin{center}
	\subfigure[Synchronization is achieved even with a negative edge weight ($w_{uv}=-0.1$).] {\scalebox{.4}{\includegraphics{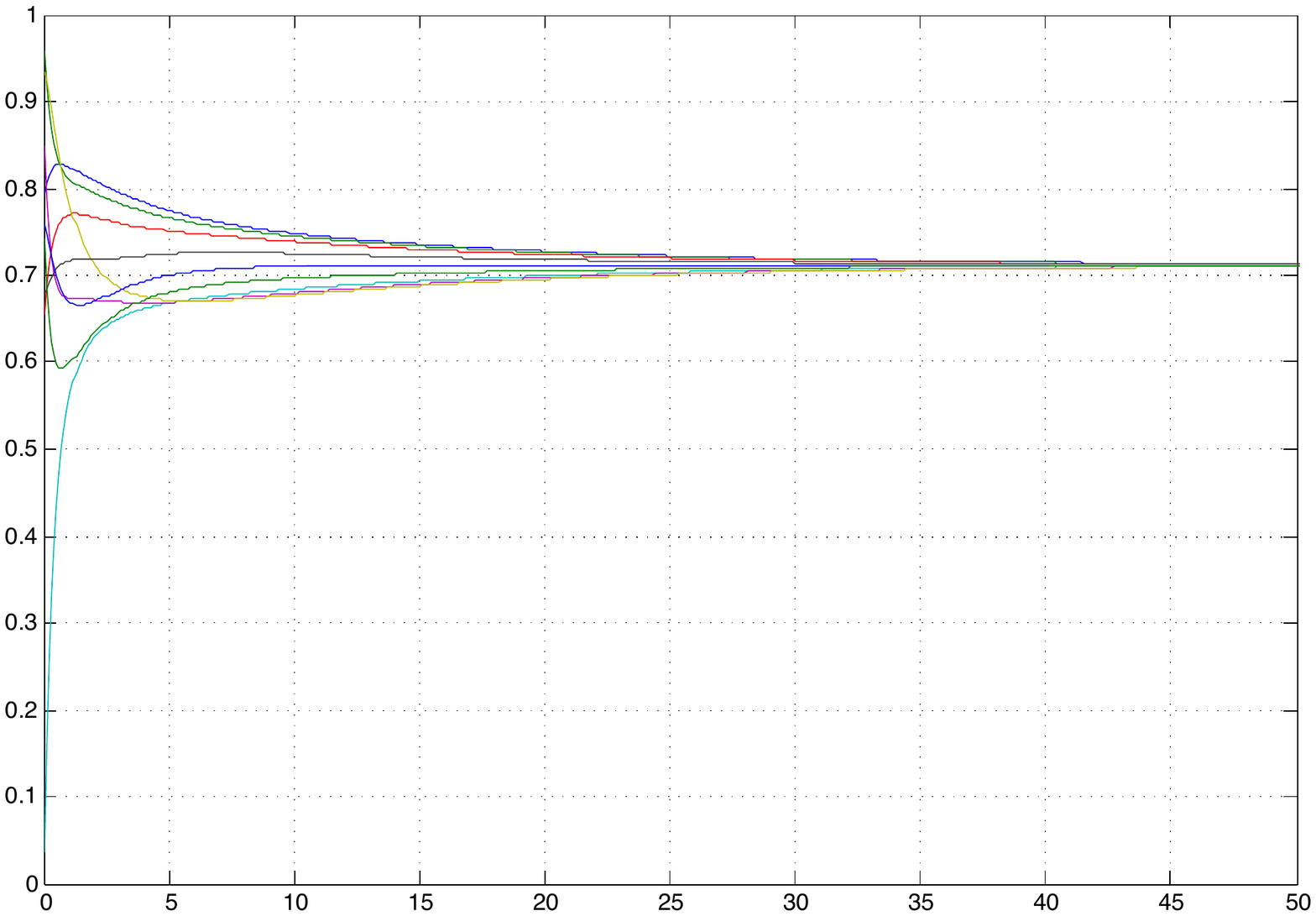}}\label{fig:sync_negweight}}
	\subfigure[Cluster synchronization from a negative edge weight ($w_{uv}=-0.25$).]{\scalebox{.4}{\includegraphics{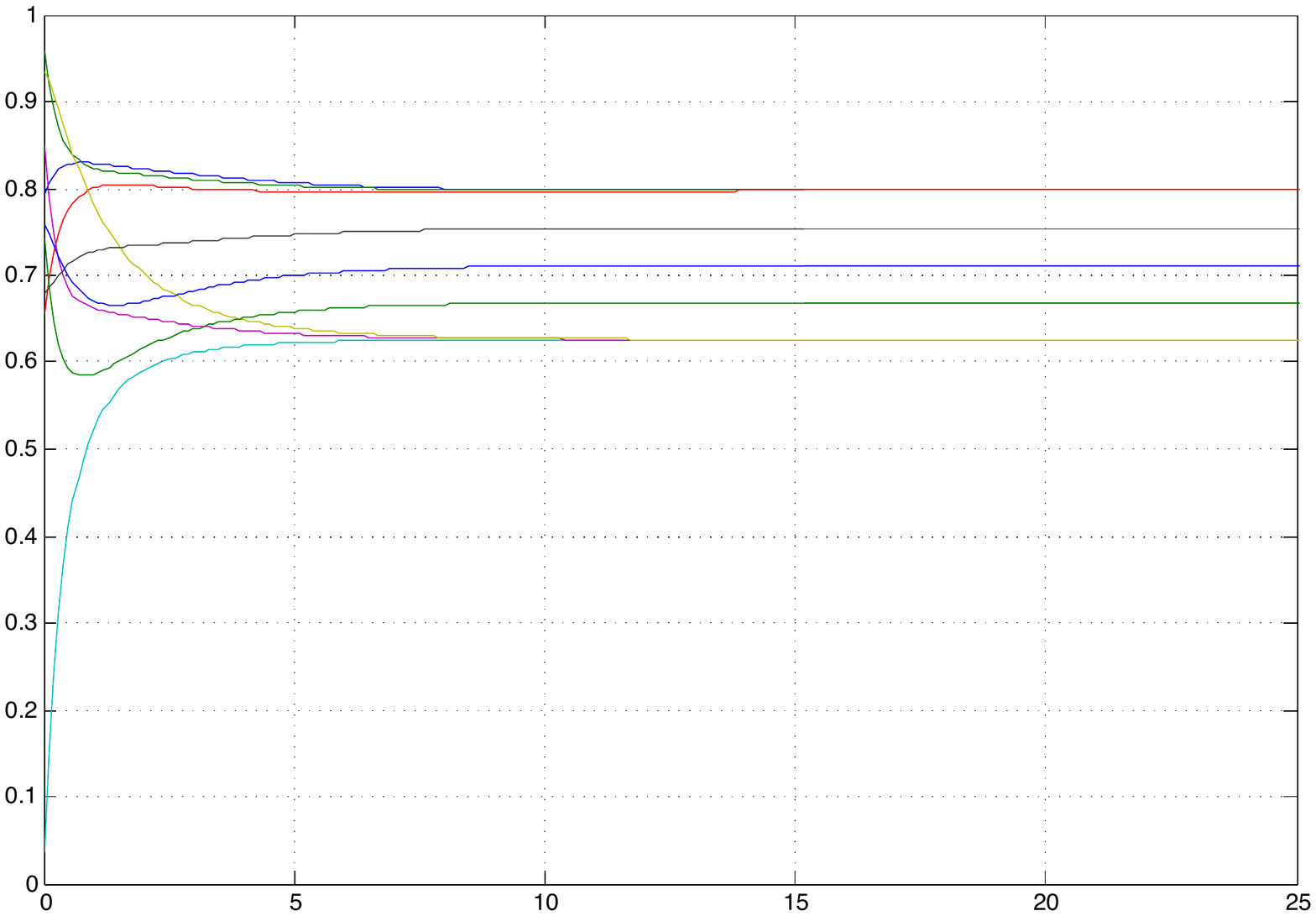}}\label{fig:cluster_negweight}}
  %\vspace{-20pt}
  \caption{The consensus protocol for the graph in Figure \ref{fig.graph1cycle}.}\label{fig:negweights_consensus}
\end{center}
%\vspace{-20pt}
\end{figure}

In fact, using the results form Theorem \ref{thm:multiple_edge}, we can formulate a very precise statement regarding the clustering structure of a weighted consensus protocol with negative edge weights.  Due to space limitations, we provide here a proof for the clustering structure obtained by a graph with a single cycle and single negative weight edge.

\begin{proposition}\label{prop:cluster1cycle}
Consider the consensus protocol (\ref{consensus}) over a connected weighted graph $\mc{G}$.  Assume that $\mc{G}$ contains only one cycle, $\mc{G}_+$ is connected, and there is only one negative weight edge $e_- = (u,v)$ with $|\mc{W}_-(e_-)| = \mc{R}_{uv}^{-1}(\mc{G}_+)$.  Then for any initial condition, the trajectories generated by (\ref{consensus}) form $q$ clusters, where $q$ is the number of components in the graph obtained by removing all the edges in $\mc{G}$ contained in the cycle.
\end{proposition}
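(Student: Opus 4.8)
\emph{Proof approach.} The plan is to reduce the statement to a description of $\ker L(\mc{G})$ and then invoke the standard asymptotic behaviour of (\ref{consensus}).

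\emph{Structure of the graph.} Since $\mc{G}$ is connected, has exactly one cycle, and $\mc{G}_+=(\mc{V},\mc{E}\setminus\{e_-\})$ is connected, $\mc{G}_+$ must be a spanning tree $\mc{T}$ of $\mc{G}$; hence the unique cycle $\mc{C}$ of $\mc{G}$ is the negative-weight edge $e_-=(u,v)$ together with the unique $\mc{T}$-path joining $u$ and $v$. Deleting from $\mc{G}$ all edges of $\mc{C}$ leaves a forrest whose $q$ connected components I denote $\mc{V}_1,\dots,\mc{V}_q$. By Theorem~\ref{thm:one_negedge}, the hypothesis $|\mc{W}(e_-)|=\mc{R}_{uv}^{-1}(\mc{G}_+)$ makes $L(\mc{G})$ positive semi-definite. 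Being symmetric and positive semi-definite, $L(\mc{G})$ yields the solution $x(t)=e^{-L(\mc{G})t}x(0)$ of (\ref{consensus}), which is bounded and satisfies $x(t)\to\Pi\,x(0)$ with $\Pi$ the orthogonal projector onto $\ker L(\mc{G})$. Thus it suffices to show that every vector in $\ker L(\mc{G})$ is constant on each $\mc{V}_j$.

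\emph{The kernel.} Writing $L(\mc{G})=EWE^T$, we have $L(\mc{G})z=0$ iff $WE^Tz\in\ker E$. For a connected graph with a single independent cycle, $\ker E=\myspan\{c_{\mc{C}}\}$, where $c_{\mc{C}}$ is the signed circulation vector of $\mc{C}$ (entries $\pm1$ on the edges of $\mc{C}$, $0$ elsewhere). Hence $E^Tz=\alpha\,W^{-1}c_{\mc{C}}$ for some scalar $\alpha$, and since $\mathrm{range}(E^T)=(\ker E)^{\perp}$ this is solvable only if $\alpha\,c_{\mc{C}}^TW^{-1}c_{\mc{C}}=\alpha\sum_{k\in\mc{C}}w_k^{-1}=0$. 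Because the effective resistance across a tree path is the sum of its edge resistances, $\sum_{k\in\mc{C}}w_k^{-1}=\mc{W}(e_-)^{-1}+\mc{R}_{uv}(\mc{G}_+)$, which vanishes exactly under the stated hypothesis $\mc{W}(e_-)=-\mc{R}_{uv}^{-1}(\mc{G}_+)$. So $\alpha$ is free; with $\ones\in\ker L(\mc{G})$ this gives $\dim\ker L(\mc{G})=2$, consistent with $\sigma(L(\mc{G}))=(|\mc{V}|-2,0,2)$. Now for any $z\in\ker L(\mc{G})$ and any edge $(i,j)\notin\mc{C}$, the corresponding entry of $E^Tz=\alpha W^{-1}c_{\mc{C}}$ is $z_i-z_j=0$, so $z$ is constant on each $\mc{V}_j$. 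Therefore $\Pi\,x(0)$ is constant on each $\mc{V}_j$ for every initial condition, i.e.\ the agents partition into the $q$ clusters $\mc{V}_1,\dots,\mc{V}_q$; moreover the kernel vector obtained from $\alpha=1$ changes value across every edge of $\mc{C}$, so this partition admits no coarsening and for generic $x(0)$ all $q$ cluster values are distinct.

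\emph{Main obstacle.} The crux is the kernel computation: correctly identifying $\ker E$ for a unicyclic graph, running the solvability/orthogonality argument, and recognising $\sum_{k\in\mc{C}}w_k^{-1}$ as $\mc{W}(e_-)^{-1}+\mc{R}_{uv}(\mc{G}_+)$ so that the boundary hypothesis is exactly what frees the parameter $\alpha$. Once $\ker L(\mc{G})$ is pinned down, both the clustering conclusion and the convergence of (\ref{consensus}) are routine.
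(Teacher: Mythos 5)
Your proof is correct, and it reaches the same destination as the paper's --- both ultimately pin down the extra null vector of $L(\mc{G})$ as the solution of $E^Tz=\alpha W^{-1}c_{\mc{C}}$ (the paper writes the right-hand side as $W^{-1}\bigl[\Tcycle^T\;\; -1\bigr]^T$, which is exactly $W^{-1}$ times the signed circulation vector) and then reads off the clustering from the fact that $z_i-z_j=0$ on every non-cycle edge. Where you differ is in how the nullity $n_0=2$ is established. The paper routes this through its inertia machinery: Corollary~\ref{cor:inertia} reduces the count to the zero eigenvalues of $\R W\R^T=W_++\mc{W}(e_-)\Tcycle\Tcycle^T$, and a rank-one perturbation argument shows this matrix is singular precisely because the nonzero eigenvalue of $W_+^{-\frac{1}{2}}\Tcycle\Tcycle^TW_+^{-\frac{1}{2}}$ equals $\Tcycle^TW_+^{-1}\Tcycle=\mc{R}_{uv}(\mc{G}_+)$. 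You instead work directly with $E$: identify $\ker E$ with the one-dimensional cycle space and observe that $E^Tz=\alpha W^{-1}c_{\mc{C}}$ is solvable for $\alpha\neq 0$ iff $c_{\mc{C}}^TW^{-1}c_{\mc{C}}=\sum_{k\in\mc{C}}w_k^{-1}=\mc{W}(e_-)^{-1}+\mc{R}_{uv}(\mc{G}_+)=0$. The two computations are the same fact in different coordinates ($\Tcycle^TW_+^{-1}\Tcycle$ is the tree-path portion of $c_{\mc{C}}^TW^{-1}c_{\mc{C}}$), but your version is more self-contained (no appeal to the essential edge Laplacian or Sylvester's law), makes the necessity of the boundary condition $|\mc{W}(e_-)|=\mc{R}_{uv}^{-1}(\mc{G}_+)$ explicit rather than implicit, and spells out the convergence step $x(t)\to\Pi x(0)$ that the paper leaves tacit. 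One small caveat, which applies equally to the paper: for initial conditions orthogonal to the second kernel vector the limit is full consensus, so the ``$q$ distinct clusters'' conclusion is really a statement for generic $x(0)$; you flag this, the paper does not.
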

\begin{proof}
Observe that $\mc{G}_+$ is in fact a spanning tree ($\mc{T}$), and the cycle subgraph $\mc{C}=\mc{G}_-$.  By Corollary \ref{cor:inertia} it follows that the matrix $\R W \R$ has one less eigenvalue at the origin than $L(\mc{G})$.  We now show that $\R W \R$ has one eigenvalue at the origin.  From the assumption on the structure of $\mc{G}$, it can be shown that $[\Tcycle]_i=1$ whenever edge $i$ in $\mc{T}$ is in the cycle, and 0 otherwise.  Thus, $\R W \R = W_+ + \mc{W}_-(e_-) \Tcycle \Tcycle^T$ is similar to $\mc{R}_{uv}(\mc{G}_+)I-W_{+}^{-\frac{1}{2}}\Tcycle \Tcycle^TW_{+}^{-\frac{1}{2}}$.  The matrix $W_{+}^{-\frac{1}{2}}\Tcycle \Tcycle^TW_{+}^{-\frac{1}{2}}$ is a rank-one matrix with eigenvalue equal precisely to $\mc{R}_{uv}(\mc{G}_+)$ showing that $\R W \R $ has only one eigenvalue at the origin.

Having verified that $L(\mc{G})$ contains two eigenvalues at the origin, we are now able to explicitly construct a null-space eigenvector orthogonal to $\ones$.  Such a vector must satisfy 
$$E^Tx = W^{-1}\leftm{c} \Tcycle \\ -1\rightm.$$  
This vector will have a characteristic structure such that all entries corresponding to nodes in the cycle have unique values and sum to zero, and the remaining entries must be constant corresponding to each component obtained by removing all the edges in $\mc{G}$ contained in the cycle.  The trajectories generated by (\ref{consensus}) will thus reach agreement on each of these components resulting in the claimed clustering structure.\QEDclosed
\end{proof}

\section{Concluding Remarks}\label{sec:conclusion}

This work provided an analysis of the definiteness of the weighted Laplacian with negative edge weights.  It was shown that the signature of the weighted Laplacian is related to two special matrices, the essential edge Laplacian and the matrix $\Rf W \Rf^T$.  These matrices were then shown to be intimately related to the notion of effective resistance in a graph.  In this way, we could conclude that the definiteness of the weighted Laplacian depends on both the magnitude of the negative edge weights and their location in the graph.  In particular, a single negative edge weight must have a magnitude inversely proportional to the effective resistance between the incident nodes to produce an indefinite weighted Laplacian.  These results were also extended to more general scenarios, and their utility demonstrated on a linear weighted consensus protocol.

{\small
\section*{Acknowledgements}
The work presented here has been supported by the Arlene \& Arnold Goldstein Center at the TechnionÕs Autonomous System Program (TASP) and the Israel Science Foundation.

%%%%%%%%%%%%%%%%%%%%%%%
{\footnotesize
 \bibliographystyle{IEEEtran}
\bibliography{LaplacianMatrix}
}
}
\end{document}